\newtheorem{theorem}{Theorem}[section]
\newtheorem{proposition}[theorem]{Proposition}
\theoremstyle{definition}
\theoremstyle{remark}
\numberwithin{equation}{section}
\newcommand{\bea}{\begin{eqnarray}}
\newcommand{\eea}{\end{eqnarray}}
\newcommand{\ba}{\begin{array}}
\newcommand{\ea}{\end{array}}
\begin{document}
\title[On Some Geometric And Topological Properties of Sequence Spaces]{Some
Geometric And Topological Properties Of\\
A New Sequence Space Defined By\\
de la Vall\'{e}e-Poussin Mean}
\author{NEC\.{I}P \c{S}\.{I}M\c{S}EK$^*$}
\address{ ADIYAMAN UNIVERSITY\\
Faculty of Arts and Sciences, Department of Mathematics-ADIYAMAN-TURKEY}
\email{necsimsek@yahoo.com}
\author{Ekrem SAVA\c{S}}
\address{\.{I}STANBUL COMMERCE UNIVERSITY, Department of Mathematics,
Uskudar, \.{I}stanbul, TURKEY}
\email{ekremsavas@yahoo.com}
\author{Vatan Karakaya}
\address{YILDIZ TECHNICAL UNIVERSITY\\
Department of Mathematical Engineering\\
Davutpasa Campus, Esenler\\
\.{I}STANBUL-TURKEY}
\email{vkkaya@yahoo.com}
\subjclass[2000]{46A45, 46B20, 46B45}
\keywords{de la Vall\'{e}e-Poussin, Ces\'{a}ro sequence spaces, H-property,
Banach-Saks property, geometrical properties.\\
\ $^*$Corresponding Author: necsimsek@yahoo.com}

\begin{abstract}
The main purpose of this paper is to introduce a new sequence space by using
de la Vall\'{e}e-Poussin \ mean and investigate both the modular structure
with some geometric properties and some topological properties with respect
to the Luxemburg norm.
\end{abstract}

\maketitle

\section{Introduction}

In summability theory, de la Vall\'{e}e-Poussin's mean is first used to
define the $(V,\lambda )$-summability by Leindler \cite{Leindler-1965}.
Malkowsky and Sava\c{s} \cite{MalSav-2000} introduced and studied some
sequence spaces which arise from the notion of generalized de la Vall\'{e}%
e-Poussin mean. Also the $(V,\lambda )$-summable sequence spaces have been
studied by many authors including \cite{Mik2006} and \cite{Savsav-2003}.

Recently, there has been a lot of interest in investigating geometric
properties of several sequence spaces. Some of the recent work on sequence
spaces and their geometrical properties is given in the sequel: Shue \cite%
{Shue-1970} first defined the Ces\'{a}ro sequence spaces with a norm. In
\cite{Liu-1996}, it is shown that the Ces\'{a}ro sequence spaces $ces_{p}$ $%
\left( 1\leq p<\infty \right) $ have Kadec-Klee and Local Uniform
Rotundity(LUR) properties. Cui-Hudzik-Pluciennik \cite{Cui-Hud-Plu-1997}
showed that Banach-Saks of type $p$ property holds in these spaces. In \cite%
{MurFeyz-2006}, Mursaleen et al studied some geometric properties of normed
Euler sequence space. Karakaya \cite{Karakaya2007} defined a new sequence
space involving lacunary sequence space equipped with the Luxemburg norm and
studied Kadec-Klee($H$), rotund($R$) properties of this space. Quite
recently, Sanhan and Suantai \cite{San-Suan-2003} generalized normed Ces\'{a}%
ro sequence spaces to paranormed sequence spaces by making use of K\"{o}the
sequence spaces. They also defined and investigated modular structure and
some geometrical properties of these generalized sequence spaces. In
addition, some related papers on this topic can be found in \cite{Basar},%
\cite{Chen},\cite{Diestel},\cite{Mu-2007},\cite{Mus-1983} and \cite%
{Nec-Vat-2009}.

In this paper, our purpose is to introduce a new sequence space defined by
de la Vall\'{e}e-Poussin's mean \ and investigate some topological and
geometric properties of this space.

The organization of our paper is as follows:\ In the first section, we
introduce some definition and concepts that are used throughout the paper.
In the second section, we construct a new paranormed sequence space and
investigate some geometrical properties of this space. Finally, in the third
section, we construct the modular space $V_{\rho }(\lambda ;p)$ which is
obtained by paranormed space $V(\lambda ;p)$\ and we investigate the
Kadec-Klee property of this space. We also show that the modular space $%
V_{\rho }(\lambda ;p)$\ is a Banach space under the Luxemburg norm. Also in
this section, we investigate the Banach-Saks of type $p$ property of the
space $V_{p}(\lambda ).$

\section{Preliminaries, Background and Notation}

The space of all real sequences $x=(x(i))_{i=1}^{\infty }$ is denoted by $%
\ell ^{0}$. Let $\left( X,\left\Vert .\right\Vert \right) $ (for the brevity
$X=\left( X,\left\Vert .\right\Vert \right) $ ) be a normed linear space and
let $S(X)$ and $B(X)$ be the unit sphere and unit ball of $X$, respectively.

A Banach space $X$ which is a subspace of $\ell ^{0}$ is said to be a K\"{o}%
the sequence space, if $\left( \text{see \cite{Linden-1977}}\right) ;$

$\left( i\right) $ for any $x\in \ell ^{0}$ and $y\in X$ such that $%
\left\vert x(i)\right\vert \leq \left\vert y(i)\right\vert $ for all $i\in
\mathbb{N}$, we have $x\in X$ \ and $\left\Vert x\right\Vert \leq \left\Vert
y\right\Vert ,$

$\left( ii\right) $ there is $x\in X$ with $x(i)>0$ for all $i\in \mathbb{N}%
. $

We say that $x\in X$ is order continuous if for any sequence \ $\left(
x_{n}\right) $ in $X$ such that $x_{n}(i)\leq \left\vert x(i)\right\vert $
for each $i\in \mathbb{N}$ and $x_{n}(i)\rightarrow 0\left( n\rightarrow
\infty \right) $, $\left\Vert x_{n}\right\Vert \rightarrow 0$\ holds$.$ A K%
\"{o}the sequence space $X$ is said to be order continuous if all sequences
in $X$ are order continuous. It is easy to see that $x\in X$ is order
continuous if and only if $\left\Vert
(0,0,...,0,x(n+1),x(n+2),...)\right\Vert \rightarrow 0$ as $n\rightarrow
\infty $.

A Banach space $X$ is said to have the \textit{Kadec-Klee property} (or
property ($H$)) if every weakly convergent sequence on the unit sphere with
the weak limit in the sphere is convergent in norm.

Let $1<p<\infty .$ A Banach space is said to have the $Banach-Saks$ $type$ $%
p $ or property $(BS_{p})$, if every weakly null sequence $(x_{k})$ has a
subsequence $(x_{k_{l}})$ such that for some $C>0,$%
\begin{equation*}
\left\Vert \sum\limits_{l=0}^{n}x_{k_{l}}\right\Vert <C(n+1)^{\frac{1}{p}}
\end{equation*}%
for all $n\in \mathbb{N}$ (see \cite{Knau-1992}).

For a real vector space $X$, a functional $\rho :X\rightarrow \lbrack
0,\infty ]$ is called a \textit{modular} if it satisfies the following
conditions:

\qquad i) $\rho (x)=0\Leftrightarrow x=0,$

\qquad ii) $\rho $($\alpha x)=\rho (x)$ for all $\alpha \in \mathbb{F}$ \
with $|\alpha |=1,$

\qquad iii) $\rho (\alpha x+\beta y)\leq \rho (x)+\rho (y)$ for all $x,y\in
X $ and all $\alpha ,\beta \geq 0$ with $\alpha +\beta =1.$

Further, the modular $\rho $ is called \textit{convex} if

\qquad iv) $\rho (\alpha x+\beta y)\leq \alpha \rho (x)+\beta \rho (y)$
holds for all $x,y\in X$ and all $\alpha ,\beta \geq 0$ with $\alpha +\beta
=1.$

$\rho $ is a modular in $X$, we define%
\begin{equation*}
X_{\rho }=\left\{ x\in X:\text{ }\rho (\lambda x)\rightarrow 0\text{ \ \ as
\ \ \ }\lambda \rightarrow 0^{+}\right\},
\end{equation*}%
\begin{equation*}
X_{\rho }^{\ast }=\left\{ x\in X:\text{ }\rho (\lambda x)<\infty \text{\ for
some\ \ \ }\lambda >0\right\} .
\end{equation*}%
It is clear that $X_{\rho }\subseteq X_{\rho }^{\ast }$. If $\rho $\ is a
convex modular, for $x\in X_{\rho }$, we define
\begin{equation*}
||x||_{L}=\inf \left\{ \lambda >0:\rho (\frac{x}{\lambda })\leq 1\right\}
\end{equation*}%
and%
\begin{equation*}
||x||_{A}=\inf_{\lambda >0}\frac{1}{\lambda }\left( 1+\rho (\lambda
x)\right) .
\end{equation*}%
If $\rho $\ is a convex modular on $X$, then $X_{\rho }=X_{\rho }^{\ast }$
and both $||\cdot ||_{L}$ and $||\cdot ||_{A}$ is a norm on $X_{\rho }$ for
which $X_{\rho }$ is a Banach space.

The norms $||\cdot ||_{L}$ and $||\cdot ||_{A}$ are called the \textit{%
Luxemburg norm} and the \textit{Amemiya norm(Orlicz norm)}, respectively.

In addition

\begin{equation*}
||x||_{L}\leq ||x||_{A}\leq 2||x||_{L}
\end{equation*}%
for all $x\in X_{\rho }$ holds (see \cite{Orlicz-1990}).

A sequence $(x_{n})$ of elements of $X_{\rho }$ is called \textit{modular
convergent} to $\ x\in X_{\rho }$ $\ \ $if there exists a $\ \lambda >0$
such that $\ \rho (\lambda (x_{n}-x))\rightarrow 0$ \ as $n\rightarrow
\infty .$

\begin{proposition}
\label{prop1.1}Let \ $(x_{n})\subset X_{\rho }$. Then $||x_{n}||_{L}%
\rightarrow 0$ $($or equivalently $||x||_{A}\rightarrow 0)$ if and only if \
$\rho (\lambda (x_{n}))\rightarrow 0$ \ as $n\rightarrow \infty $, for every
$\lambda >0.$
\end{proposition}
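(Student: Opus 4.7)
The plan is to prove the two implications separately, first observing that the parenthetical equivalence between $\|x_n\|_L\to 0$ and $\|x_n\|_A\to 0$ is immediate from the sandwich $\|x\|_L\leq \|x\|_A\leq 2\|x\|_L$ already recorded in the preliminaries. Thus it suffices to work with the Luxemburg norm throughout.

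For the easier direction, assume that $\rho(\lambda x_n)\to 0$ for every $\lambda>0$. Given $\varepsilon>0$, apply the hypothesis to the single value $\lambda=1/\varepsilon$. Then $\rho(x_n/\varepsilon)\to 0$, so $\rho(x_n/\varepsilon)\leq 1$ for all sufficiently large $n$; but then $\varepsilon$ belongs to the set whose infimum defines $\|x_n\|_L$, so $\|x_n\|_L\leq \varepsilon$. Since $\varepsilon$ was arbitrary, $\|x_n\|_L\to 0$.

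The substantive direction is the converse. Assume $\|x_n\|_L\to 0$, and fix $\lambda>0$ together with $\varepsilon\in(0,1)$. The first step is a preparatory monotonicity lemma: for any $x\in X_\rho$, the map $t\mapsto \rho(x/t)$ is non-increasing on $(0,\infty)$. Indeed, for $0<t_1<t_2$ one writes $x/t_2=(t_1/t_2)(x/t_1)+(1-t_1/t_2)\cdot 0$ and uses the convexity of $\rho$ together with $\rho(0)=0$. As a consequence, whenever $t>\|x\|_L$, the definition of the Luxemburg norm as an infimum forces $\rho(x/t)\leq 1$. Applying this with $x=x_n$ and $t=\varepsilon/\lambda$, which exceeds $\|x_n\|_L$ for $n$ large, gives $\rho(\lambda x_n/\varepsilon)\leq 1$.

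The second step is a second use of convexity. Since $\lambda x_n=\varepsilon\cdot(\lambda x_n/\varepsilon)+(1-\varepsilon)\cdot 0$, convexity yields
\begin{equation*}
\rho(\lambda x_n)\leq \varepsilon\,\rho(\lambda x_n/\varepsilon)+(1-\varepsilon)\,\rho(0)\leq \varepsilon,
\end{equation*}
so $\rho(\lambda x_n)\to 0$. The only real obstacle is the monotonicity lemma for $t\mapsto\rho(x/t)$, which is the pivot of the argument; everything else is a direct unwinding of definitions. Once that lemma is in hand, the two applications of convexity bracket the size of $\rho(\lambda x_n)$ by a quantity of order $\lambda\cdot\|x_n\|_L$, closing the proof.
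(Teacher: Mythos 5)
Your proof is correct, but note that the paper does not actually prove this proposition: its ``proof'' is a one-line citation to Orlicz's book (\emph{Linear Functional Analysis}, p.~15, Th.~1), so there is no in-paper argument to compare yours against. What you supply is the standard self-contained argument, and every step checks out: the reduction to the Luxemburg norm via $\|x\|_L\le\|x\|_A\le 2\|x\|_L$ is legitimate; the forward direction correctly uses only the single value $\lambda=1/\varepsilon$ to place $\varepsilon$ in the defining set of $\|x_n\|_L$; the monotonicity of $t\mapsto\rho(x/t)$ follows from convexity of $\rho$ together with $\rho(0)=0$ (axiom (i)), and it is exactly what is needed to pass from ``$t$ exceeds the infimum'' to ``$\rho(x/t)\le 1$''; and the final convex combination $\lambda x_n=\varepsilon(\lambda x_n/\varepsilon)+(1-\varepsilon)\cdot 0$ gives $\rho(\lambda x_n)\le\varepsilon$. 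The only (harmless) imprecision is the closing remark that the bound is ``of order $\lambda\|x_n\|_L$'' --- what the argument literally delivers is $\rho(\lambda x_n)\le\varepsilon$ for every $\varepsilon\in(0,1)$ with $\varepsilon>\lambda\|x_n\|_L$, which implies that claim but is worth stating as such. Your write-up could be inserted in place of the citation with essentially no changes.
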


\begin{proof}
See \cite[p.15, Th.1]{Orlicz-1990}.
\end{proof}

Throughout the paper, the sequence $p=(p_{k})$ is a bounded sequence of
positive real numbers with $p_{k}>1$, also $H=\sup_{k}p_{k}$ and $M=\max
\{1,H\}.$

Besides, we will need the following inequalities in the sequel;%
\begin{equation}
\left\vert a_{k}+b_{k}\right\vert ^{p_{k}}\leq K\left(
|a_{k}|^{p_{k}}+|b_{k}|^{p_{k}}\right)  \label{1.1}
\end{equation}%
\begin{equation}
\left\vert a_{k}+b_{k}\right\vert ^{t_{k}}\leq
|a_{k}|^{t_{k}}+|b_{k}|^{t_{k}}  \label{1.2}
\end{equation}%
where $t_{k}=\frac{p_{k}}{M}\leq 1$ and $K=\max \{1,2^{H-1}\}$, with $%
H=\sup_{k}p_{k}.$

Now we begin the construction of a new sequence space.

Let $\Lambda =(\lambda _{k})$ be a nondecreasing sequence of positive real
numbers tending to infinity and let $\lambda _{1}=1$ and $\lambda _{k+1}\leq
\lambda _{k}+1.$

The generalized de la Vall\'{e}e-Poussin means of a sequence $x=\left(
x_{k}\right) $ are defined as follows:%
\begin{equation*}
t_{k}(x)=\frac{1}{\lambda _{k}}\dsum\limits_{j\in I_{k}}x_{k}\text{ \ where }%
I_{k}=[k-\lambda _{k}+1,k]\ \ \ \ \text{for \ \ \ }k=1,2,...\text{ }.
\end{equation*}%
We write%
\begin{equation*}
\lbrack V,\lambda ]_{0}=\left\{ x\in \ell ^{0}:\underset{k\rightarrow \infty
}{\lim }\frac{1}{\lambda _{k}}\dsum\limits_{j\in I_{k}}|x_{j}|=0\right\}
\end{equation*}%
\begin{equation*}
\lbrack V,\lambda ]=\left\{ x\in \ell ^{0}:x-le\in \lbrack V,\lambda ]_{0},%
\text{ for some }l\in \mathbb{C}\right\}
\end{equation*}%
and%
\begin{equation*}
\lbrack V,\lambda ]_{\infty }=\left\{ x\in \ell ^{0}:\underset{k}{\sup }%
\frac{1}{\lambda _{k}}\dsum\limits_{j\in I_{k}}|x_{j}|<\infty \right\}
\end{equation*}%
for the sequence spaces that are strongly summable to zero, strongly
summable and strongly bounded by the de la Vall\'{e}e-Poussin method (see
\cite{Leindler-1965}). In the special case where $\lambda _{k}=k$ for $%
k=1,2,...$ the spaces $[V,\lambda ]_{0},$ $[V,\lambda ]$ and $[V,\lambda
]_{\infty }$ reduce to the spaces $w_{0},$ $w$ and $w_{\infty }$ introduced
by Maddox \cite{Maddox-1}.

We now define the following new paranormed sequence space:%
\begin{equation*}
V(\lambda ;p):=\left\{ x=(x_{j})\in \ell ^{0}:\dsum\limits_{k=1}^{\infty
}\left( \frac{1}{\lambda _{k}}\dsum\limits_{j\in I_{k}}|x_{j}|\right)
^{p_{k}}<\infty \right\} .
\end{equation*}%
The space $V(\lambda ;p)$ is reduced to some special sequence spaces
corresponding to special cases of sequence $(\lambda _{k})$ and $\left(
p_{k}\right) $. For example:\ If we take $\lambda _{k}=k,$ we obtain the
space $ces(p)$ defined by \cite{Suant2003}. If we take $\lambda _{k}=k$ and $%
p_{k}=p$ for all $k\in \mathbb{N}$, the space $V(\lambda ;p)$ reduces to $\ $%
the space $ces_{p}$ defined by \cite{Shue-1970}.

\section{Some Topological Properties Of The Sequence Space $V(\protect%
\lambda ;p)$}

In this section, we will give the topological properties of the space $%
V(\lambda ;p).$ We begin by obtaining the first main result.

\begin{theorem}
\label{theo2.1}a) The space $V(\lambda ;p)$ is a complete paranormed space
with paranorm defined by%
\begin{equation}
h(x):=\left( \dsum\limits_{k=1}^{\infty }\left( \frac{1}{\lambda _{k}}%
\dsum\limits_{j\in I_{k}}|x_{j}|\right) ^{p_{k}}\right) ^{\frac{1}{M}}.
\label{2.1}
\end{equation}%
b) if $p_{k}=p;$ the space $V(\lambda ;p)$ reduced to $V_{p}(\lambda )$
defined by%
\begin{equation*}
V_{p}(\lambda ):=\left\{ x=(x_{j})\in \ell ^{0}:\dsum\limits_{k=1}^{\infty
}\left( \frac{1}{\lambda _{k}}\dsum\limits_{j\in I_{k}}|x_{j}|\right)
^{p}<\infty \right\} .
\end{equation*}%
And the space $V_{p}(\lambda )$ is a complete normed space defined by%
\begin{equation*}
\left\Vert x\right\Vert _{V_{p}(\lambda )}:=\left(
\dsum\limits_{k=1}^{\infty }\left( \frac{1}{\lambda _{k}}\dsum\limits_{j\in
I_{k}}|x_{j}|\right) ^{p}\right) ^{\frac{1}{p}}\text{ \ \ \ \ \ \ }%
(1<p<\infty ).
\end{equation*}

\begin{proof}
a)\ The linearity of $V(\lambda ;p)$ with respect to coordinatewise addition
and scalar multiplication follows from the inequality (\ref{1.1}). Because,
for any $x,y\in V(\lambda ;p)$ the following inequalities are satisfied:%
\begin{equation*}
(2.2)\text{ \ \ }\left( \dsum\limits_{k=1}^{\infty }\left( \frac{1}{\lambda
_{k}}\dsum\limits_{j\in I_{k}}|x_{j}+y_{j}|\right) ^{p_{k}}\right) ^{\frac{1%
}{M}}\leq \left( \dsum\limits_{k=1}^{\infty }\left( \frac{1}{\lambda _{k}}%
\dsum\limits_{j\in I_{k}}|x_{j}|\right) ^{p_{k}}\right) ^{\frac{1}{M}%
}+\left( \dsum\limits_{k=1}^{\infty }\left( \frac{1}{\lambda _{k}}%
\dsum\limits_{j\in I_{k}}|y_{j}|\right) ^{p_{k}}\right) ^{\frac{1}{M}}
\end{equation*}%
and for any $\alpha \in \mathbb{R}$ $($see \cite{Maddox-3}$)$ we have%
\begin{equation}
\left\vert \alpha \right\vert ^{p_{k}}\leq \max \{1,|\alpha |^{M}\}.
\tag{2.3}  \label{2.3}
\end{equation}%
It is clear that $h(\theta )=0$ and $h(x)=h(-x)$ for all $x\in V(\lambda ;p)$%
. Again the inequalities (2.2) and (\ref{2.3}) yield the subadditivity of $h$
and%
\begin{equation*}
h(\alpha x)\leq \max \{1,|\alpha |\}h(x).
\end{equation*}%
Let $(x^{m})$ be any sequence of points of the space $V(\lambda ;p)$ such
that $h(x^{m}-x)\rightarrow 0$ and $(\alpha _{n})$ also be any sequence of
scalars such that $\alpha _{n}\rightarrow \alpha .$ Then, since the
inequality%
\begin{equation*}
h(x^{m})\leq h(x)+h(x^{m}-x)
\end{equation*}%
holds by subadditivity of $h$, the sequence $\left( h(x^{m})\right) _{m\in
\mathbb{N}}$ is bounded and we thus have%
\begin{eqnarray*}
h(\alpha _{m}x^{m}-\alpha x) &=&\left( \dsum\limits_{k=1}^{\infty }\left(
\frac{1}{\lambda _{k}}\dsum\limits_{j\in I_{k}}|\alpha _{m}x_{j}^{m}-\alpha
x_{j}|\right) ^{p_{k}}\right) ^{\frac{1}{M}} \\
&\leq &\left\vert \alpha _{m}-\alpha \right\vert h(x^{m})+|\alpha
|h(x^{m}-x).
\end{eqnarray*}%
The last expression tends to zero as $m\rightarrow \infty ,$ that is, the
scalar multiplication is continuous. Hence $h$ is paranorm on the space $%
V(\lambda ;p)$.

It remains to prove the completeness of the space $V(\lambda ;p)$.

Let $(x^{n})$ be any Cauchy sequence in the space $V(\lambda ;p)$, where $%
x=(x_{j}^{n})=(x_{1}^{n},x_{2}^{n},x_{3}^{n},...)$. Then, for a given $%
\varepsilon >0$, there exists a positive integer $n_{0}(\varepsilon )$ such
that%
\begin{equation*}
h(x^{n}-x^{m})<\frac{\varepsilon }{2}
\end{equation*}%
for every $m,n\geq n_{0}(\varepsilon )$. By using the definition of $h$, we
obtain that%
\begin{equation*}
\left( \dsum\limits_{k=1}^{\infty }\left( \frac{1}{\lambda _{k}}%
\dsum\limits_{j\in I_{k}}|x_{j}^{n}-x_{j}^{m}|\right) ^{p_{k}}\right)
<\varepsilon ^{M}
\end{equation*}%
for every $m,n\geq n_{0}(\varepsilon ).$ Also we get, for fixed $j\in
\mathbb{N}$, $|x_{j}^{n}-x_{j}^{m}|<\varepsilon $ for every $m,n\geq
n_{0}(\varepsilon )$. Hence it is clear that the sequences $(x_{j}^{n})$ is
a Cauchy sequence in $\mathbb{R}$. Since the real numbers set is complete,
so we have $x_{j}^{m}$ $\rightarrow x_{j}$ for every $n\geq
n_{0}(\varepsilon )$ and as $m\rightarrow \infty $. Now we get%
\begin{equation*}
\left( \dsum\limits_{k=1}^{r}\left( \frac{1}{\lambda _{k}}\dsum\limits_{j\in
I_{k}}|x_{j}^{n}-x_{j}|\right) ^{p_{k}}\right) <\varepsilon ^{M}.
\end{equation*}%
If we pass to the limit over the $r$ to infinity and $n\geq
n_{0}(\varepsilon )$ we obtained $h(x^{n}-x)<\varepsilon $. So, the sequence
$(x^{n})$ is a Cauchy sequence in the space $V(\lambda ;p).$

It remains to show that the space $V(\lambda ;p)$ is complete. Since we have
$x=x^{n}-x^{n}+x$, we get%
\begin{equation*}
\dsum\limits_{k=1}^{\infty }\left( \frac{1}{\lambda _{k}}\dsum\limits_{j\in
I_{k}}|x_{j}|\right) ^{p_{k}}\leq \dsum\limits_{k=1}^{\infty }\left( \frac{1%
}{\lambda _{k}}\dsum\limits_{j\in I_{k}}|x_{j}^{n}-x_{j}|\right)
^{p_{k}}+\dsum\limits_{k=1}^{\infty }\left( \frac{1}{\lambda _{k}}%
\dsum\limits_{j\in I_{k}}|x_{j}^{n}|\right) ^{p_{k}}.
\end{equation*}%
Consequently, we obtain $x\in V(\lambda ;p)$. This completes the proof.

$b)$ By taking $p_{k}=p$ in $(a)$, it can be easily shown the proof of $(b)$.
\end{proof}
\end{theorem}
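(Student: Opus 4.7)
The plan is to verify the three usual ingredients of a complete paranormed space in part (a), and then observe that part (b) is the degenerate case $p_k \equiv p$. For linearity of $V(\lambda;p)$, I would use inequality (1.1): for $x,y \in V(\lambda;p)$, the inner sum satisfies $|x_j + y_j| \leq |x_j| + |y_j|$, and then $(a+b)^{p_k} \leq K(a^{p_k} + b^{p_k})$ shows the series $\sum_k \bigl(\lambda_k^{-1}\sum_{j \in I_k}|x_j+y_j|\bigr)^{p_k}$ converges, and similarly for scalar multiples using (2.3). Thus $V(\lambda;p)$ is a linear subspace of $\ell^0$.

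For the paranorm axioms, $h(\theta)=0$ and $h(x)=h(-x)$ are immediate from the definition. The crucial step is subadditivity, and I would reduce it to Minkowski's inequality in $\ell^M$ via the substitution $u_k = \bigl(\lambda_k^{-1}\sum_{j \in I_k}|x_j|\bigr)^{p_k/M}$, $v_k$ analogous for $y$. Since $t_k = p_k/M \leq 1$, inequality (1.2) gives
\[
\bigl(\lambda_k^{-1}\textstyle\sum_j |x_j+y_j|\bigr)^{p_k} \leq \bigl(u_k + v_k\bigr)^M,
\]
and then Minkowski's inequality for the exponent $M \geq 1$ yields
\[
\Bigl(\textstyle\sum_k (u_k+v_k)^M\Bigr)^{1/M} \leq \Bigl(\sum_k u_k^M\Bigr)^{1/M} + \Bigl(\sum_k v_k^M\Bigr)^{1/M} = h(x) + h(y).
\]
For continuity of scalar multiplication, I would first derive $h(\beta x) \leq \max\{1,|\beta|\}\,h(x)$ directly from (2.3), then write
\[
h(\alpha_n x^n - \alpha x) \leq h\bigl((\alpha_n - \alpha)x^n\bigr) + h\bigl(\alpha(x^n - x)\bigr)
\]
and send $n \to \infty$, using that $h(x^n)$ stays bounded whenever $h(x^n - x) \to 0$ (from subadditivity).

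Completeness I would handle by the standard two-step coordinate argument. Given a Cauchy sequence $(x^n) \subset V(\lambda;p)$, the trivial bound $|x_j^n - x_j^m| \leq C\,\lambda_k\,h(x^n - x^m)$ for $j \in I_k$ shows each coordinate sequence $(x_j^n)_n$ is Cauchy in $\mathbb{R}$; let $x_j$ be its limit and $x = (x_j)$. Then for any truncation index $r$, Fatou-style passage to the limit as $m \to \infty$ in the finite sum $\sum_{k=1}^r \bigl(\lambda_k^{-1}\sum_{j \in I_k}|x_j^n - x_j^m|\bigr)^{p_k}$ produces a bound uniform in $r$, so letting $r \to \infty$ gives $h(x^n - x) \to 0$. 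Writing $x = (x - x^n) + x^n$ and invoking subadditivity places $x$ in $V(\lambda;p)$.

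Part (b) follows by specialization: when $p_k \equiv p > 1$ we have $H = p$ and $M = p$, so the paranorm $h$ becomes exactly the expression claimed for $\|\cdot\|_{V_p(\lambda)}$; the only extra item to check is absolute homogeneity $\|\alpha x\|_{V_p(\lambda)} = |\alpha|\,\|x\|_{V_p(\lambda)}$, which is now clean because the exponent is constant, and completeness is inherited from (a). I expect the main obstacle to be the subadditivity step, since it requires the right change of variables to reduce a weighted-exponent sum to a genuine Minkowski inequality; the Luxemburg-type tricks and the direct $(a+b)^{p_k} \leq K(a^{p_k}+b^{p_k})$ estimate only give subadditivity up to a constant, which is insufficient for a paranorm, so the fractional exponent inequality (1.2) with exponent $t_k = p_k/M$ must be used exactly as above.
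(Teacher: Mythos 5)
Your proposal follows essentially the same route as the paper: linearity from (1.1) and (2.3), the paranorm axioms with the Minkowski-type inequality (2.2) as the decisive step, continuity of scalar multiplication via boundedness of $h(x^{n})$, and completeness by the standard coordinatewise limit argument. The only substantive difference is that you actually justify (2.2), via the substitution $u_{k}=\bigl(\lambda_{k}^{-1}\sum_{j\in I_{k}}|x_{j}|\bigr)^{p_{k}/M}$ combined with (1.2) and Minkowski's inequality for the exponent $M\geq 1$, whereas the paper merely asserts (2.2) and attributes it to (1.1) --- which, as you correctly observe, only gives subadditivity up to the constant $K$.
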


\section{Some Geometric Properties Of The Spaces $V_{\protect\rho }(\protect%
\lambda ;p)$ And $V_{p}(\protect\lambda ).$}

In this section we construct the modular structure of the space $V(\lambda
;p)$ and since the Luxemburg norm is equivalent to usual norm of the space $%
V_{p}(\lambda )$, we show that the space $V_{p}(\lambda )$ has the
Banach-Saks type $p$.

Firstly, we will introduce a generalized modular sequence space $V_{\rho
}(\lambda ;p)$\ by%
\begin{equation*}
V_{\rho }(\lambda ;p):=\left\{ x\in \ell ^{0}:\rho (\lambda x)<\infty ,\text{
for some }\lambda >0\right\} ,
\end{equation*}%
where%
\begin{equation*}
\rho (x)=\left( \dsum\limits_{k=1}^{\infty }\left( \frac{1}{\lambda _{k}}%
\dsum\limits_{j\in I_{k}}|x_{j}|\right) ^{p_{k}}\right) .
\end{equation*}%
It can be seen that $\rho :V_{\rho }(\lambda ;p)\rightarrow \lbrack 0,\infty
]$ is a modular on $V_{\rho }(\lambda ;p).$

Note that the Luxemburg norm on the sequence space $V_{\rho }(\lambda ;p)$\
is defined as follows:\
\begin{equation*}
||x||_{L}=\inf \left\{ \lambda >0:\rho (\frac{x}{\lambda })\leq 1\right\} ,%
\text{ \ \ \ \ \ \ for all }x\in V_{\rho }(\lambda ;p)
\end{equation*}%
or equally%
\begin{equation*}
||x||_{L}=\inf \left\{ \lambda >0:\rho (\frac{x}{\lambda })=\left(
\dsum\limits_{k=1}^{\infty }\left( \frac{1}{\lambda _{k}}\dsum\limits_{j\in
I_{k}}|x_{j}|\right) ^{p_{k}}\right) \leq 1\right\} .
\end{equation*}%
In the same way we can introduce the Amemiya norm (Orlicz norm) on the
sequence space $\ V_{\rho }(\lambda ;p)$\ as follows:%
\begin{equation*}
||x||_{A}=\inf_{\lambda >0}\frac{1}{\lambda }\left( 1+\rho (\lambda
x)\right) \text{ \ \ \ \ \ for all }x\in V_{\rho }(\lambda ;p).
\end{equation*}%
We now give some basic properties of the modular $\rho $ on the space $%
V_{\rho }(\lambda ;p)$. Also we will investigate some relationships between
the modular $\rho $ and the Luxemburg norm on $V_{\rho }(\lambda ;p).$

\begin{proposition}
\label{prop3.1} The functional $\rho $\ is a convex modular on $V_{\rho
}(\lambda ;p).$
\end{proposition}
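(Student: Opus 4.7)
The plan is to verify the four axioms of a convex modular directly, exploiting the fact that $p_k>1$ so that $t\mapsto t^{p_k}$ is convex on $[0,\infty)$. The first two axioms are essentially mechanical: for (i), note that $\rho(x)=0$ forces every inner block sum $\frac{1}{\lambda_k}\sum_{j\in I_k}|x_j|$ to vanish, and since the blocks $I_k$ cover $\mathbb{N}$ (indeed $k\in I_k$ for every $k$), this forces $x_j=0$ for all $j$. For (ii), $|\alpha|=1$ gives $|\alpha x_j|=|x_j|$ termwise, so $\rho(\alpha x)=\rho(x)$.

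The substantive content lies in axiom (iv), from which (iii) will follow for free (since $\alpha,\beta\in[0,1]$ and $\rho\geq 0$ imply $\alpha\rho(x)+\beta\rho(y)\leq\rho(x)+\rho(y)$). For convexity, fix $\alpha,\beta\geq 0$ with $\alpha+\beta=1$. The key is a two-step estimate for each $k$:
\begin{equation*}
\frac{1}{\lambda_k}\sum_{j\in I_k}|\alpha x_j+\beta y_j|\;\leq\;\alpha\cdot\frac{1}{\lambda_k}\sum_{j\in I_k}|x_j|\;+\;\beta\cdot\frac{1}{\lambda_k}\sum_{j\in I_k}|y_j|,
\end{equation*}
by the triangle inequality, followed by convexity of the map $t\mapsto t^{p_k}$ on $[0,\infty)$ (valid since $p_k>1$):
\begin{equation*}
(\alpha a+\beta b)^{p_k}\;\leq\;\alpha\, a^{p_k}+\beta\, b^{p_k},
\end{equation*}
applied with $a=\frac{1}{\lambda_k}\sum_{j\in I_k}|x_j|$ and $b=\frac{1}{\lambda_k}\sum_{j\in I_k}|y_j|$. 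Summing over $k\in\mathbb{N}$ yields
\begin{equation*}
\rho(\alpha x+\beta y)\;\leq\;\alpha\,\rho(x)+\beta\,\rho(y),
\end{equation*}
which is (iv). As noted, this immediately supplies (iii).

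I expect no serious obstacle here; the only subtlety is making sure that the convexity inequality $(\alpha a+\beta b)^{p_k}\leq\alpha a^{p_k}+\beta b^{p_k}$ is invoked only where $p_k\geq 1$, which is exactly the standing assumption on $(p_k)$. One should also briefly remark that the definition of $V_\rho(\lambda;p)$ guarantees $\rho(\alpha x+\beta y)<\infty$ whenever the right-hand side is finite, so the inequalities genuinely live in $[0,\infty)$ rather than being vacuous $\infty\leq\infty$ statements; this follows from the domain definition together with the linearity of $V_\rho(\lambda;p)$ inherited from Theorem~\ref{theo2.1}.
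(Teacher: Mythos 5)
Your verification is correct: the paper itself gives no proof of Proposition~\ref{prop3.1}, stating only that it follows ``with standard techniques'' as in the cited references, and your direct check of the axioms --- triangle inequality inside each block, then monotonicity and convexity of $t\mapsto t^{p_k}$ for $p_k\geq 1$, then summation over $k$ --- is exactly that standard argument. The only point worth stating explicitly is that you use not just convexity but also the fact that $t\mapsto t^{p_k}$ is nondecreasing on $[0,\infty)$ when passing from the block-sum inequality to the inequality of their $p_k$-th powers; your closing remark about finiteness is harmless but unnecessary, since a modular is permitted to take the value $+\infty$ and the inequality in axiom (iv) holds in $[0,\infty]$ regardless.
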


\begin{proposition}
\label{prop3.2} For any $x\in V_{\rho }(\lambda ;p)$

i) \textit{if} $\ ||x||_{L}\leq 1$, then $\rho (x)\leq ||x||_{L};$

ii) $||x||_{L}=1$ if and only if $\rho (x)=1.$
\end{proposition}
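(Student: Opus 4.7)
My plan is to reduce everything to two elementary comparisons between $\rho$ and scalar multiplication. Since $p_{k}>1$ for every $k$, for $0<\alpha\le 1$ one has $\alpha^{p_{k}}\le\alpha$, which after factoring out of each summand yields the sub-homogeneity bound
\[
\rho(\alpha x)\le\alpha\,\rho(x),\qquad 0<\alpha\le 1.
\]
Dually, for $0<\lambda<1$ one has $\lambda^{-p_{k}}\le\lambda^{-H}$ (since $1/\lambda>1$ and $p_{k}\le H$), giving
\[
\rho(x/\lambda)\le\lambda^{-H}\rho(x),\qquad 0<\lambda<1.
\]
Together with the termwise monotonicity of $\lambda\mapsto\rho(x/\lambda)$, which lets one evaluate one-sided limits by monotone convergence, these two inequalities will carry the whole argument.

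For part (i), suppose first that $0<||x||_{L}<1$ and pick any $\lambda$ with $||x||_{L}<\lambda<1$. By definition of the infimum $\rho(x/\lambda)\le 1$, so the sub-homogeneity bound gives $\rho(x)=\rho(\lambda\cdot x/\lambda)\le\lambda\rho(x/\lambda)\le\lambda$; letting $\lambda\downarrow ||x||_{L}$ yields $\rho(x)\le ||x||_{L}$. The boundary case $||x||_{L}=1$ needs a different idea because the scaling factor would be $\lambda=1$; I would instead note that $\rho(x/\lambda)\le 1$ for every $\lambda>1$, and as $\lambda\downarrow 1$ each summand $\lambda^{-p_{k}}(\tfrac{1}{\lambda_{k}}\sum_{j\in I_{k}}|x_{j}|)^{p_{k}}$ increases monotonically to $(\tfrac{1}{\lambda_{k}}\sum_{j\in I_{k}}|x_{j}|)^{p_{k}}$, so monotone convergence gives $\rho(x)\le 1=||x||_{L}$. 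The degenerate case $||x||_{L}=0$ is separate: it forces $\rho(x/\lambda)\le 1$ for every $\lambda>0$, which, letting $\lambda\downarrow 0$, is compatible only with $x$ being the zero sequence, so $\rho(x)=0$.

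Part (ii) then follows quickly from (i). If $\rho(x)=1$ then $1$ is admissible in the defining infimum, so $||x||_{L}\le 1$; and $||x||_{L}<1$ would by (i) give $\rho(x)\le ||x||_{L}<1$, contradicting the hypothesis, so $||x||_{L}=1$. Conversely, assume $||x||_{L}=1$, so by (i) $\rho(x)\le 1$, and suppose for contradiction that $\rho(x)<1$. Using the second inequality above, for any $0<\lambda<1$ we have $\rho(x/\lambda)\le\lambda^{-H}\rho(x)$; since $\rho(x)<1$ we may pick $\lambda<1$ close enough to $1$ that $\lambda^{-H}\rho(x)\le 1$. Then $\lambda$ lies in the defining set, so $||x||_{L}\le\lambda<1$, contradicting $||x||_{L}=1$, whence $\rho(x)=1$.

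The main obstacle is precisely the failure of $\rho$ to be positively homogeneous of degree one: it is only sub-homogeneous under contractions and super-homogeneous under dilations. Consequently the two boundary statements — the bound $\rho(x)\le ||x||_{L}$ at the value $||x||_{L}=1$ in part (i), and the lower bound $\rho(x)\ge 1$ when $||x||_{L}=1$ in part (ii) — are not accessible by a single rescaling, and each requires its own refinement: monotone convergence from above in the first case, and the one-sided inequality $\lambda^{-p_{k}}\le\lambda^{-H}$ for $\lambda<1$ in the second.
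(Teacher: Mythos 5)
Your proof is correct and complete; the paper itself omits the argument entirely, deferring to ``standard techniques'' from Sanhan--Suantai and Cui--Hudzik, and the two scaling inequalities you isolate, $\rho(\alpha x)\le\alpha\,\rho(x)$ for $0<\alpha\le 1$ (from $p_{k}>1$) and $\rho(x/\lambda)\le\lambda^{-H}\rho(x)$ for $0<\lambda<1$ (from $p_{k}\le H<\infty$), are precisely those standard techniques. Your separate treatment of the boundary cases $\|x\|_{L}=1$ (via monotone convergence as $\lambda\downarrow 1$) and $\|x\|_{L}=0$ in part (i) is a genuine point of care that the cited sources tend to gloss over, and it is handled correctly here.
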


\begin{proposition}
\label{prop3.3} For any $x\in V_{\rho }(\lambda ;p)$, we have

i) \ If $\ 0<a<1$ and $||x||_{L}>a$, then $\rho (x)>a^{H};$

ii) \textit{if} $\ a\geq 1$ and $||x||_{L}<a,$ then $\rho (x)<a^{H}.$
\end{proposition}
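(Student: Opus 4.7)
The plan is to exploit the scaling behaviour of $\rho$: under $x \mapsto cx$ each summand scales by $c^{p_k}$, and since $1 < p_k \leq H$ one has $c^{p_k} \leq c^H$ when $c \geq 1$ and $c^{p_k} \leq c$ when $0 < c \leq 1$. I will also use that the map $\mu \mapsto \rho(x/\mu)$ is non-increasing on $(0,\infty)$, since each summand $\mu^{-p_k}\bigl(\tfrac{1}{\lambda_k}\sum_{j\in I_k}|x_j|\bigr)^{p_k}$ is decreasing in $\mu$. Consequently the set $\{\mu>0 : \rho(x/\mu)\leq 1\}$ is an interval of the form $[||x||_L,\infty)$ or $(||x||_L,\infty)$, a fact I use at both steps to transfer the infimum information into a pointwise estimate on $\rho$.

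For (i), since $||x||_L>a$ the value $\mu=a$ lies strictly below the interval above, so $\rho(x/a)>1$. Expanding,
\[
\rho(x/a)=\sum_{k=1}^{\infty} a^{-p_k}\left(\frac{1}{\lambda_k}\sum_{j\in I_k}|x_j|\right)^{p_k},
\]
and using $a^{-p_k}\leq a^{-H}$, valid because $0<a<1$ and $p_k\leq H$, I obtain $\rho(x/a)\leq a^{-H}\rho(x)$. Combining with $\rho(x/a)>1$ yields $\rho(x)>a^{H}$, as required.

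For (ii) I expect to need a little extra care because directly applying $\rho(x/a)\leq 1$ (which does hold) only gives $\rho(x)\leq a^{H}$, not the strict inequality. The trick is to insert an auxiliary parameter: choose $b$ with $||x||_L<b<a$. By the infimum definition together with the monotonicity above, $\rho(x/b)\leq 1$. Writing
\[
\rho(x)=\sum_{k=1}^{\infty} b^{p_k}\left(\frac{1}{\lambda_k}\sum_{j\in I_k}\frac{|x_j|}{b}\right)^{p_k},
\]
I split into two cases. If $b\geq 1$, then $b^{p_k}\leq b^{H}$ gives $\rho(x)\leq b^{H}\rho(x/b)\leq b^{H}<a^{H}$, the last step because $b<a$ and $a\geq 1$. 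If $b<1$, then $p_k>1$ gives $b^{p_k}\leq b$, hence $\rho(x)\leq b\,\rho(x/b)\leq b<1\leq a^{H}$. In either case $\rho(x)<a^{H}$.

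The only real obstacle is the strict inequality in (ii); everything else is a routine expansion of the defining sum for $\rho$ combined with the elementary bounds on $c^{p_k}$. The introduction of the intermediate $b$ strictly between $||x||_L$ and $a$ is precisely the mechanism that upgrades the non-strict bound $\rho(x/a)\leq 1$ to the strict conclusion $\rho(x)<a^{H}$.
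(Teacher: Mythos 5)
Your proof is correct. Note that the paper itself gives no argument for this proposition: it only remarks that Propositions 3.1--3.3 ``are proved with standard techniques in a similar way as in'' the cited references of Sanhan--Suantai and Cui--Hudzik, so there is no in-paper proof to compare against line by line. Your argument is exactly that standard scaling argument: the monotonicity of $\mu\mapsto\rho(x/\mu)$ identifies $\{\mu>0:\rho(x/\mu)\leq 1\}$ as a right-unbounded interval with infimum $\Vert x\Vert_{L}$, and the termwise bounds $a^{-p_{k}}\leq a^{-H}$ (for $0<a<1$) and $b^{p_{k}}\leq b^{H}$ or $b^{p_{k}}\leq b$ transfer $\rho(x/a)>1$, resp.\ $\rho(x/b)\leq 1$, into the stated estimates on $\rho(x)$. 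The one place you deviate from the usual template is the strict inequality in (ii): the standard route applies Proposition \ref{prop3.2}(i) to $y=x/a$, getting $\rho(x/a)\leq\Vert x/a\Vert_{L}<1$ and hence $\rho(x)\leq a^{H}\rho(x/a)<a^{H}$ in one step, whereas you interpose an auxiliary $b$ with $\Vert x\Vert_{L}<b<a$. Your variant is slightly longer (it needs the case split $b\geq 1$ versus $b<1$) but is self-contained, since it does not presuppose Proposition \ref{prop3.2}; both are valid.
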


The proofs of the three propositions given above are proved with standard
techniques in a similar way as in \cite{San-Suan-2003} and \cite%
{Cui-Hud-1998}.

\begin{proposition}
\label{prop3.4} Let $(x_{n})$ be a sequence in $V_{\rho }(\lambda ;p).\ $%
Then:

i) if $\underset{n\rightarrow \infty }{\lim }||x_{n}||_{L}=1,$ then $%
\underset{n\rightarrow \infty }{\lim }\rho (x_{n})=1;$

ii) if $\underset{n\rightarrow \infty }{\lim }\rho (x_{n})=0,$ then $%
\underset{n\rightarrow \infty }{\lim }||x_{_{n}}||_{L}=0.$
\end{proposition}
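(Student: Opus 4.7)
The plan is to derive both implications directly from the quantitative comparison between the modular $\rho$ and the Luxemburg norm given by Propositions \ref{prop3.2} and \ref{prop3.3}. The key structural fact driving the argument is that $H=\sup_{k}p_{k}$ is finite, so the map $a\mapsto a^{H}$ is continuous at $a=1$ (with value $1$) and at $a=0^{+}$ (with value $0$); this is precisely what allows the two-sided estimates on $\rho(x_{n})$ from Proposition \ref{prop3.3} to pinch down to the desired limits.

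For part (i), fix $\varepsilon\in(0,1)$. From the hypothesis $\|x_{n}\|_{L}\to 1$ there exists $N$ such that $1-\varepsilon<\|x_{n}\|_{L}<1+\varepsilon$ for all $n\geq N$. I would then apply Proposition \ref{prop3.3}(i) with $a=1-\varepsilon\in(0,1)$ to obtain $\rho(x_{n})>(1-\varepsilon)^{H}$, and Proposition \ref{prop3.3}(ii) with $a=1+\varepsilon\geq 1$ to obtain $\rho(x_{n})<(1+\varepsilon)^{H}$. Since $\varepsilon$ is arbitrary and $(1\pm\varepsilon)^{H}\to 1$ as $\varepsilon\to 0^{+}$, this gives $\rho(x_{n})\to 1$.

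For part (ii), I would argue by contrapositive. Suppose $\|x_{n}\|_{L}\not\to 0$; then there exist $\varepsilon_{0}\in(0,1)$ and a subsequence $(x_{n_{k}})$ with $\|x_{n_{k}}\|_{L}>\varepsilon_{0}$ for every $k$. Applying Proposition \ref{prop3.3}(i) with $a=\varepsilon_{0}$ yields $\rho(x_{n_{k}})>\varepsilon_{0}^{H}>0$ for all $k$, which contradicts $\rho(x_{n})\to 0$. Hence $\|x_{n}\|_{L}\to 0$.

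The proof is essentially a bookkeeping exercise and I do not foresee a substantive obstacle: the only thing to watch is that the hypotheses of Proposition \ref{prop3.3} are correctly matched (the first clause requires $0<a<1$, the second $a\geq 1$), and that the boundedness of $p=(p_{k})$ — encoded in $H<\infty$ — is the feature that makes $(1\pm\varepsilon)^{H}$ and $\varepsilon_{0}^{H}$ behave as desired. If $H$ were allowed to be infinite the quantitative mechanism underlying Proposition \ref{prop3.3} would collapse, so the finiteness of $H$ is the critical standing hypothesis on which both parts rely.
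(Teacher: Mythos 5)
Your proof is correct and follows essentially the same route as the paper: both parts are deduced from Proposition \ref{prop3.3}, part (i) by squeezing $\rho(x_{n})$ between $(1-\varepsilon)^{H}$ and $(1+\varepsilon)^{H}$, and part (ii) by the contrapositive via a subsequence bounded below in norm. In fact your write-up is tidier than the paper's, which contains typographical slips in part (i) (it writes $\|x_{n}\|_{L}$ where $\rho(x_{n})$ is meant and repeats $(1-\varepsilon)^{H}$ for the upper bound); your version states the intended estimates correctly.
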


\begin{proof}
(i) Suppose that $\underset{n\rightarrow \infty }{\lim }||x_{n}||_{L}=1.\ $%
Let $\varepsilon \in (0,1).$ Then there exists $\ n_{0}$ \ such that $%
1-\varepsilon <||x_{n}||_{L}<1+\varepsilon $ \ for all \ $n\geq n_{0}.$
Since $(1-\varepsilon )^{H}<||x_{n}||_{L}<(1+\varepsilon )^{H}$ for all \ $%
n\geq n_{0}$ by the Proposition \ref{prop3.3} (i) and (ii), we have\ $\rho
(x_{n})\geq (1-\varepsilon )^{H}$ and $\rho (x_{n})\leq (1-\varepsilon )^{H}$%
. Therefore $\ \underset{n\rightarrow \infty }{\lim }\rho (x_{n})=1.$

(ii) Suppose that $||x_{n}||_{L}\nrightarrow 0$. Then there is an $%
\varepsilon \in (0,1)$ and a subsequence $(x_{n_{_{k}}})$ \ of \ $(x_{n})$
such that $||x_{n_{_{k}}}||_{L}>\varepsilon $ for all $k\in \mathbb{N}$. By
the Proposition \ref{prop3.3} (i), we obtain that $\rho
(x_{n_{k}})>\varepsilon ^{H}$ for all $k\in \mathbb{N}$. This implies that $%
\rho (x_{_{n_{k}}})\nrightarrow 0$ as $n\rightarrow \infty $. Hence \ $\rho
(x_{n})\nrightarrow 0.$
\end{proof}

\begin{theorem}
\label{theo3.5}The space $V_{\rho }(\lambda ;p)$ is a Banach space with
respect to Luxemburg norm defined by%
\begin{equation*}
||x||_{L}=\inf \left\{ \lambda >0:\rho (\frac{x}{\lambda })\leq 1\right\} .
\end{equation*}
\end{theorem}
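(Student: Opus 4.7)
The plan is to split the claim into two parts: first, that $\|\cdot\|_L$ is genuinely a norm on $V_\rho(\lambda;p)$, and second, that the resulting normed space is complete. The first part is immediate: by Proposition~\ref{prop3.1} the functional $\rho$ is a convex modular, so the general fact recorded in Section~2 (convex modular $\Rightarrow$ $X_\rho=X_\rho^{\ast}$ with $\|\cdot\|_L$ a norm) applies directly. The substantive work lies in completeness.

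For completeness, I would take an arbitrary $\|\cdot\|_L$-Cauchy sequence $(x^n)\subset V_\rho(\lambda;p)$ and transfer the Cauchy condition to the modular level. Fix $\varepsilon\in(0,1)$; for all sufficiently large $m,n$ one has $\|x^n-x^m\|_L<\varepsilon<1$, so Proposition~\ref{prop3.2}(i) applied to $x^n-x^m$ gives $\rho(x^n-x^m)\le\|x^n-x^m\|_L<\varepsilon$. Hence $(x^n)$ is modular-Cauchy. Using $\lambda_1=1$ and $I_1=\{1\}$, the $k=1$ term of $\rho(x^n-x^m)$ is $|x_1^n-x_1^m|^{p_1}$, which forces $(x_1^n)$ to be Cauchy in $\mathbb{R}$; a straightforward induction on $j$, using $I_k\subseteq\{1,\dots,k\}$ together with $\lambda_k\le k$, then shows that $(x_j^n)_n$ is Cauchy for every fixed $j\in\mathbb{N}$. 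Let $x_j:=\lim_n x_j^n$ and set $x=(x_j)$.

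Next I would establish modular convergence $\rho(x^n-x)\to 0$ and the membership $x\in V_\rho(\lambda;p)$. For any truncation level $N$, the finite sum $\sum_{k=1}^{N}\bigl(\tfrac{1}{\lambda_k}\sum_{j\in I_k}|x_j^n-x_j^m|\bigr)^{p_k}$ depends continuously on the finitely many coordinates of $x^m$ that appear, so letting $m\to\infty$ replaces $x^m$ by $x$ in that partial sum, while the result is bounded above by $\liminf_m\rho(x^n-x^m)\le\varepsilon$. Letting $N\to\infty$ via monotone convergence gives $\rho(x^n-x)\le\varepsilon$ for all large $n$; hence $\rho(x^n-x)\to 0$, and Proposition~\ref{prop3.4}(ii) upgrades this to $\|x^n-x\|_L\to 0$. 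Membership $x\in V_\rho(\lambda;p)$ then follows from $x=x^n-(x^n-x)$ together with linearity of the space and the finiteness of $\rho(x^n-x)$ for large $n$.

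The main obstacle is the Fatou-type passage to the coordinatewise limit inside the infinite series defining $\rho$. I would handle it by the finite-truncation argument above, which avoids any appeal to external measure-theoretic machinery and relies only on the already-established modular-Cauchy estimate. Apart from this step, the argument reduces to formal applications of Propositions~\ref{prop3.2} and \ref{prop3.4} and the elementary continuity of finite sums in the coordinates.
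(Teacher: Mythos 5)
Your proposal is correct and follows essentially the same route as the paper's proof: pass from norm-Cauchy to modular-Cauchy via Proposition~\ref{prop3.2}(i), extract coordinatewise limits, show $\rho(x^{n}-x)\to 0$, and upgrade to norm convergence. In fact your finite-truncation argument for the step $\rho(x^{n}-x)\le\liminf_{m}\rho(x^{n}-x^{m})$ is a cleaner justification of the limit interchange that the paper simply asserts, and your appeal to Proposition~\ref{prop3.4}(ii) at the end avoids the paper's slightly circular final inequality $\rho(x^{n}-x)<\Vert x^{n}-x\Vert_{L}<\varepsilon$.
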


\begin{proof}
We show that every Cauchy sequence in $V_{\rho }(\lambda ;p)$ is convergent
according to the Luxemburg norm.

Let \ $(x^{n}(j))$\ \ be\ any Cauchy sequence in $V_{\rho }(\lambda ;p)$ and
$\varepsilon \in (0,1).$ Thus, there exists $\ n_{0}$ \ such that $\
||x_{n}-x_{m}||_{L}<\varepsilon ^{M}$ for all $\ m,n\geq n_{0}.$ By the
Proposition 3.2 (i), we obtain%
\begin{equation}
\rho (x^{n}-x^{m})<||x^{n}-x^{m}||_{L}<\varepsilon ^{M},  \label{3.1}
\end{equation}%
for all $\ n,m\geq n_{0},$ that is;%
\begin{equation*}
\dsum\limits_{k=1}^{\infty }\left( \frac{1}{\lambda _{k}}\dsum\limits_{j\in
I_{k}}|x^{n}(j)-x^{m}(j)|\right) ^{p_{k}}<\varepsilon
\end{equation*}%
for\ all\ $m,n\geq n_{0}.$ For fixed\ $j\in \mathbb{N},$ the last inequality
gives that%
\begin{equation*}
|x^{n}(j)-x^{m}(j)|<\varepsilon
\end{equation*}%
for all $m,n\geq n_{0}.$ Hence we obtain that the sequence $(x^{n}(j))$ is\
a Cauchy sequence in $\mathbb{R}$. Since $\mathbb{R}$ is complete, $%
x^{m}(j)\rightarrow x(j)$ as $m\rightarrow \infty .$ Therefore, we have%
\begin{equation*}
\dsum\limits_{k=1}^{\infty }\left( \frac{1}{\lambda _{k}}\dsum\limits_{j\in
I_{k}}|x^{n}(j)-x(j)|\right) ^{p_{k}}<\varepsilon
\end{equation*}%
for all $\ n\geq n_{0}.$

It remains to show that the sequence $(x(j))$ is an element of $V_{\rho
}(\lambda ;p).$ From the inequality (\ref{3.1}), we can write%
\begin{equation*}
\dsum\limits_{k=1}^{\infty }\left( \frac{1}{\lambda _{k}}\dsum\limits_{j\in
I_{k}}|x^{n}(j)-x^{m}(j)|\right) ^{p_{k}}<\varepsilon
\end{equation*}%
for all \ $m,n\geq n_{0}.$ For every $j\in \mathbb{N}$, we have $%
x^{m}(j)\rightarrow x(j)$, so we obtain that%
\begin{equation*}
\rho (x^{n}-x^{m})\rightarrow \rho (x^{n}-x)
\end{equation*}%
as $m\rightarrow \infty $. Since for all\ \ $n\geq n_{0},$%
\begin{equation*}
\dsum\limits_{k=1}^{\infty }\left( \frac{1}{\lambda _{k}}\dsum\limits_{j\in
I_{k}}|x^{n}(j)-x^{m}(j)|\right) ^{p_{k}}\rightarrow
\dsum\limits_{k=1}^{\infty }\left( \frac{1}{\lambda _{k}}\dsum\limits_{j\in
I_{k}}|x^{n}(j)-x(j)|\right) ^{p_{k}}
\end{equation*}%
as$\ m\rightarrow \infty $, then by (\ref{3.1}) we have $\rho
(x^{n}-x)<\left\Vert x^{n}-x\right\Vert _{L}<\varepsilon $ \ for all $n\geq
n_{0}$. This means that $x_{n}\rightarrow x$\ \ as $n\rightarrow \infty $.
So, we have $(x_{n_{0}}-x)\in V_{\rho }(\lambda ;p)$. Since $V_{\rho
}(\lambda ;p)$\ is a linear space, we have $x=x_{n_{0}}-(x_{n_{0}}-x)\in
V_{\rho }(\lambda ;p).$ Therefore the sequence space $V_{\rho }(\lambda ;p)$
is a Banach space with respect to Luxemburg norm. This completes the proof.
\end{proof}

Next, we will show that the space $V_{\rho }(\lambda ;p)$ has Kadec-Klee
property. To do this, we need the following Proposition.

\begin{proposition}
\label{prop3.6}Let $\ x\in V_{\rho }(\lambda ;p)$ \ and $\ (x_{n})\subseteq
V_{\rho }(\lambda ;p).$ If $\ \rho (x_{n})\rightarrow \rho (x)$ as $%
n\rightarrow \infty $ and $x_{n}(j)\rightarrow x(j)$ as $n\rightarrow \infty
$ \ for all $j\in \mathbb{N}$, then $x_{n}\rightarrow x$ as $n\rightarrow
\infty .$
\end{proposition}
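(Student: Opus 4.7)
The plan is to show directly that $\rho(x_n-x)\to 0$, at which point Proposition \ref{prop3.4}(ii) gives $\|x_n-x\|_L\to 0$. So the task reduces to estimating the series
\begin{equation*}
\rho(x_n-x)=\sum_{k=1}^{\infty}\left(\frac{1}{\lambda_k}\sum_{j\in I_k}|x_n(j)-x(j)|\right)^{p_k}
\end{equation*}
by splitting it at some index $N$ into a head and a tail, and controlling each separately using the two hypotheses.

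First, fix $\varepsilon>0$. Since the assumption $\rho(x_n)\to\rho(x)$ implies in particular that $\rho(x)<\infty$, the series defining $\rho(x)$ converges, so I can choose $N=N(\varepsilon)$ large enough that the tail $\sum_{k>N}\left(\frac{1}{\lambda_k}\sum_{j\in I_k}|x(j)|\right)^{p_k}<\varepsilon$. Next, because the coordinatewise convergence $x_n(j)\to x(j)$ makes each of the first $N$ inner sums $\frac{1}{\lambda_k}\sum_{j\in I_k}|x_n(j)|$ converge to $\frac{1}{\lambda_k}\sum_{j\in I_k}|x(j)|$, the finite partial sum $\sum_{k=1}^{N}(\cdot)^{p_k}$ for $x_n$ converges to the same partial sum for $x$. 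Combined with $\rho(x_n)\to\rho(x)$, subtracting yields that the tail $\sum_{k>N}\left(\frac{1}{\lambda_k}\sum_{j\in I_k}|x_n(j)|\right)^{p_k}\to\sum_{k>N}\left(\frac{1}{\lambda_k}\sum_{j\in I_k}|x(j)|\right)^{p_k}<\varepsilon$, so for $n\geq n_1$ this tail is bounded by $2\varepsilon$.

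Now I estimate $\rho(x_n-x)$ in the two pieces. For the head $\sum_{k=1}^{N}$, the coordinatewise convergence gives $\frac{1}{\lambda_k}\sum_{j\in I_k}|x_n(j)-x(j)|\to 0$ as $n\to\infty$ for each of the finitely many indices $k\leq N$, so there exists $n_2$ such that the head is less than $\varepsilon$ for $n\geq n_2$. For the tail $\sum_{k>N}$, I apply inequality (\ref{1.1}) termwise, writing $|x_n(j)-x(j)|\leq |x_n(j)|+|x(j)|$ inside the inner sum, to obtain
\begin{equation*}
\sum_{k>N}\left(\frac{1}{\lambda_k}\sum_{j\in I_k}|x_n(j)-x(j)|\right)^{p_k}\leq K\sum_{k>N}\left(\frac{1}{\lambda_k}\sum_{j\in I_k}|x_n(j)|\right)^{p_k}+K\sum_{k>N}\left(\frac{1}{\lambda_k}\sum_{j\in I_k}|x(j)|\right)^{p_k},
\end{equation*}
which for $n\geq n_1$ is at most $K(2\varepsilon+\varepsilon)=3K\varepsilon$. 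Summing the two estimates gives $\rho(x_n-x)\leq(1+3K)\varepsilon$ for $n\geq\max\{n_1,n_2\}$, proving $\rho(x_n-x)\to 0$.

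The only delicate point is the tail control in the second paragraph: extracting from the single scalar convergence $\rho(x_n)\to\rho(x)$ the much finer information that the tails also converge. This works because the first $N$ partial sums converge by coordinatewise convergence on a finite set of indices, and then the tails are obtained by subtraction. Once that is in hand, the head is trivial and the tail of $\rho(x_n-x)$ is handled mechanically by inequality (\ref{1.1}).
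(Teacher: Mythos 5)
Your proof is correct and follows essentially the same route as the paper: split $\rho(x_n-x)$ at a cutoff index, control the head by coordinatewise convergence, deduce convergence of the tails by subtracting the convergent finite partial sums from $\rho(x_n)\rightarrow\rho(x)$, and bound the tail of $\rho(x_n-x)$ via inequality (\ref{1.1}) before invoking Proposition \ref{prop3.4}(ii). The only differences are cosmetic choices of constants ($\varepsilon$ versus $\varepsilon/6K$), and your write-up is in fact cleaner about the order of quantifiers than the paper's.
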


\begin{proof}
Let $\varepsilon >0.$ Since $\ \rho (x)=\dsum\limits_{k=1}^{\infty }\left(
\frac{1}{\lambda _{k}}\dsum\limits_{j\in I_{k}}|x(i)|\right) ^{p_{k}}<\infty
$, there exists $j\in \mathbb{N}$ such that%
\begin{equation}
\sum\limits_{k=n_{0}+1}^{\infty }\left( \frac{1}{\lambda _{k}}%
\dsum\limits_{j\in I_{k}}|x(j)|\right) ^{p_{k}}<\frac{\varepsilon }{6K}
\label{3.2}
\end{equation}%
where $K=\max \{1,2^{H-1}\}.$

Since $\rho (x_{n})-\sum\limits_{k=1}^{n_{0}}\left( \frac{1}{\lambda _{k}}%
\dsum\limits_{j\in I_{k}}|x_{n}(j)|\right) ^{p_{k}}\rightarrow \rho
(x)-\sum\limits_{k=1}^{n_{0}}\left( \frac{1}{\lambda _{k}}\dsum\limits_{j\in
I_{k}}|x(j)|\right) ^{p_{k}}$ as $n\rightarrow \infty $\ and$\
x_{n}(j)\rightarrow x(j)$ \ as $n\rightarrow \infty $ \ for all $j\in
\mathbb{N}$, there exists $n_{0}\in \mathbb{N}$ such that%
\begin{equation}
\left\vert \sum\limits_{k=n_{0}+1}^{\infty }\left( \frac{1}{\lambda _{k}}%
\dsum\limits_{j\in I_{k}}|x_{n}(j)|\right)
^{p_{k}}-\sum\limits_{k=n_{0}+1}^{\infty }\left( \frac{1}{\lambda _{k}}%
\dsum\limits_{j\in I_{k}}|x(j)|\right) ^{p_{k}}\right\vert <\frac{%
\varepsilon }{3K}  \label{3.3}
\end{equation}%
for all $n\geq n_{0}.$ Also, since $x_{n}(j)\rightarrow x(j)$ for all $j\in
\mathbb{N}$, we have $\rho (x_{n})\rightarrow \rho (x)$ as $n\rightarrow
\infty $. Hence for all $n\geq n_{0},$\ we have $|x_{n}(j)-x(j)|<\varepsilon
.$ As a result, for all $n\geq n_{0},$ we have%
\begin{equation}
\sum\limits_{k=1}^{n_{0}}\left( \frac{1}{\lambda _{k}}\dsum\limits_{j\in
I_{k}}|x_{n}(j)-x(j)|\right) ^{p_{k}}<\frac{\varepsilon }{3}.  \label{3.4}
\end{equation}%
Then from (\ref{3.2}), (\ref{3.3}) and (\ref{3.4}) it follows that for $%
n\geq n_{0},$%
\begin{eqnarray*}
\rho (x_{n}-x) &=&\sum\limits_{k=1}^{\infty }\left( \frac{1}{\lambda _{k}}%
\dsum\limits_{j\in I_{k}}|x_{n}(j)-x(j)|\right) ^{p_{k}} \\
&=&\sum\limits_{k=1}^{n_{0}}\left( \frac{1}{\lambda _{k}}\dsum\limits_{j\in
I_{k}}|x_{n}(j)-x(j)|\right) ^{p_{k}}+\sum\limits_{k=n_{0}+1}^{\infty
}\left( \frac{1}{\lambda _{k}}\dsum\limits_{j\in
I_{k}}|x_{n}(j)-x(j)|\right) ^{p_{k}} \\
&<&\frac{\varepsilon }{3}+K\left[ \sum\limits_{k=n_{0}+1}^{\infty }\left(
\frac{1}{\lambda _{k}}\dsum\limits_{j\in I_{k}}|x_{n}(j)|\right)
^{p_{k}}+\sum\limits_{k=n_{0}+1}^{\infty }\left( \frac{1}{\lambda _{k}}%
\dsum\limits_{j\in I_{k}}|x(j)|\right) ^{p_{k}}\right] \\
&=&\frac{\varepsilon }{3}+K\left[ \rho
(x_{n})-\sum\limits_{k=1}^{n_{0}}\left( \frac{1}{\lambda _{k}}%
\dsum\limits_{j\in I_{k}}|x_{n}(j)|\right)
^{p_{k}}+\sum\limits_{k=n_{0}+1}^{\infty }\left( \frac{1}{\lambda _{k}}%
\dsum\limits_{j\in I_{k}}|x(j)|\right) ^{p_{k}}\right] \\
&<&\frac{\varepsilon }{3}+K\left[ \rho (x)-\sum\limits_{k=1}^{n_{0}}\left(
\frac{1}{\lambda _{k}}\dsum\limits_{j\in I_{k}}|x_{n}(j)|\right) ^{p_{k}}+%
\frac{\varepsilon }{3K}+\sum\limits_{k=n_{0}+1}^{\infty }\left( \frac{1}{%
\lambda _{k}}\dsum\limits_{j\in I_{k}}|x(j)|\right) ^{p_{k}}\right] \\
&=&\frac{\varepsilon }{3}+K\left[ \sum\limits_{k=n_{0}+1}^{\infty }\left(
\frac{1}{\lambda _{k}}\dsum\limits_{j\in I_{k}}|x(j)|\right) ^{p_{k}}+\frac{%
\varepsilon }{3K}+\sum\limits_{k=n_{0}+1}^{\infty }\left( \frac{1}{\lambda
_{k}}\dsum\limits_{j\in I_{k}}|x(j)|\right) ^{p_{k}}\right] \\
&<&\frac{\varepsilon }{3}+\frac{\varepsilon }{3}+\frac{\varepsilon }{3}%
=\varepsilon .
\end{eqnarray*}%
This shows that $\rho (x_{n}-x)\rightarrow 0$ \ as $n\rightarrow \infty .$\
Hence by Proposition \ref{prop3.4} (ii), we have $||x_{n}-x||_{L}\rightarrow
0$ \ as $\ n\rightarrow \infty .$
\end{proof}

Now, we give one of the main result of this paper involving geometric
properties of the space $V_{\rho }(\lambda ;p).$

\begin{theorem}
\label{theo3.7}The space $V_{\rho }(\lambda ;p)$ has the Kadec-Klee property.
\end{theorem}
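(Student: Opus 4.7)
The plan is to reduce the Kadec–Klee property to Proposition \ref{prop3.6}. That proposition says that if $\rho(x_n)\to \rho(x)$ \emph{and} $x_n(j)\to x(j)$ coordinatewise, then $\|x_n-x\|_L\to 0$. So it suffices to check these two hypotheses for an arbitrary sequence $(x_n)\subset S(V_\rho(\lambda;p))$ with $x_n\to x$ weakly and $\|x\|_L=1$.

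The modular equality is immediate: by Proposition \ref{prop3.2}(ii), $\|x_n\|_L=1$ forces $\rho(x_n)=1$ for every $n$, and $\|x\|_L=1$ gives $\rho(x)=1$, so trivially $\rho(x_n)\to\rho(x)$. For coordinatewise convergence I would show that for each fixed $j\in\mathbb{N}$ the coordinate functional $e_j^*: y\mapsto y(j)$ is a bounded linear functional on $V_\rho(\lambda;p)$ with the Luxemburg norm; then weak convergence of $(x_n)$ to $x$ against $e_j^*$ yields $x_n(j)\to x(j)$ for every $j$. With both hypotheses of Proposition \ref{prop3.6} verified, the conclusion $\|x_n-x\|_L\to 0$ is immediate, which is exactly the Kadec–Klee property.

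The main technical point, and the only place where one needs to do anything beyond quoting earlier results, is the boundedness of the coordinate functionals. For any $y\in V_\rho(\lambda;p)$ and any $\lambda>\|y\|_L$, the definition of the Luxemburg norm gives $\rho(y/\lambda)\le 1$, and hence for any index $k$ with $j\in I_k$,
\begin{equation*}
\left(\frac{1}{\lambda_k}\sum_{i\in I_k}\frac{|y(i)|}{\lambda}\right)^{p_k}\le \rho\!\left(\frac{y}{\lambda}\right)\le 1.
\end{equation*}
Taking $p_k$-th roots and keeping only the $i=j$ term in the inner sum yields $|y(j)|\le \lambda_k\,\lambda$, and letting $\lambda\downarrow \|y\|_L$ gives $|e_j^*(y)|=|y(j)|\le \lambda_k\,\|y\|_L$, so $e_j^*$ is bounded. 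This is the main obstacle in the sense that everything else in the argument is a direct invocation of Propositions \ref{prop3.2} and \ref{prop3.6}; once continuity of the coordinate evaluations is in hand, the proof collapses to three short lines.
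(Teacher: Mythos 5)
Your proof is correct and follows essentially the same route as the paper: both arguments reduce the Kadec--Klee property to Proposition \ref{prop3.6} by verifying $\rho(x_n)\to\rho(x)$ (the paper via Proposition \ref{prop3.4}(i) since it only assumes $\|x_n\|_L\to 1$, you via Proposition \ref{prop3.2}(ii) since you place the sequence on the sphere) and by deducing coordinatewise convergence from weak convergence through the continuity of the coordinate functionals. The only substantive difference is that you actually prove the bound $|y(j)|\le\lambda_j\|y\|_L$ for the coordinate evaluations, a point the paper merely asserts, which is a welcome addition rather than a deviation.
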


\begin{proof}
Let \ $x\in S(V_{\rho }(\lambda ;p))$ and \ $(x_{n})\subseteq B(V_{\rho
}(\lambda ;p))$ \ such that $||x_{n}||_{L}\rightarrow 1$ and $x_{n}\overset{w%
}{\rightarrow }x$ \ as $\ n\rightarrow \infty .$ From Proposition \ref%
{prop3.2} (ii), we have $\rho (x)=1,$ so it follows from Proposition \ref%
{prop3.4}\ (i) that $\rho (x_{n})\rightarrow \rho (x)$ as $\ n\rightarrow
\infty .$ Since $x_{n}\overset{w}{\rightarrow }x$\ and the $\ i^{th}$%
-coordinate mapping $\pi _{j}:V_{\rho }(\lambda ;p)\rightarrow \mathbb{R}$ \
defined by $\ \pi _{j}(x)=x(j)$ \ is continuous linear function on $V_{\rho
}(\lambda ;p),$ it follows that \ $x_{n}(j)\rightarrow x(j)$ \ as $%
n\rightarrow \infty $ for all $j\in \mathbb{N}$. Thus, by Proposition \ref%
{prop3.6} that $x_{n}\rightarrow x$ as $\ n\rightarrow \infty .$
\end{proof}

We prove the following theorem regarding the Banach-Saks of type $p$
property.

\begin{theorem}
\label{theo3.8}The space $V_{p}(\lambda )$ has the Banach-Saks of type $p$.
\end{theorem}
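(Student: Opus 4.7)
The plan is to establish $(BS_p)$ by a gliding-hump construction in the spirit of the Cui-Hudzik-Pluciennik proof for $ces_p$. Let $(u_n) \subset V_p(\lambda)$ be weakly null. By the uniform boundedness principle $\sup_n \|u_n\|_{V_p(\lambda)} < \infty$, and since property $(BS_p)$ is invariant under scaling, one may assume $\|u_n\|_{V_p(\lambda)} \leq 1$ throughout. Each coordinate evaluation $\pi_j(x) = x(j)$ is a bounded linear functional on $V_p(\lambda)$, so the assumption $u_n \overset{w}{\to} 0$ gives $u_n(j) \to 0$ for every fixed $j \in \mathbb{N}$.

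I would then build a subsequence $(u_{n_l})$ and an increasing sequence $0 = k_0 < k_1 < k_2 < \cdots$ inductively, arranging, with the shorthand $a_{k,l} := \frac{1}{\lambda_k}\sum_{j \in I_k} |u_{n_l}(j)|$, that both the ``tail'' $\sum_{k > k_l} (a_{k,l})^p$ and the ``head'' $\sum_{k \leq k_{l-1}} (a_{k,l})^p$ are bounded by $2^{-(l+1)p - 1}$ for every $l$. The tail bound is achieved by enlarging $k_l$, because $\sum_k (a_{k,l})^p = \|u_{n_l}\|_{V_p(\lambda)}^p$ converges; the head bound is achieved by taking $n_l$ sufficiently large, because the head involves only the finitely many coordinates $j \in [1, k_{l-1}]$, on each of which $u_n(j) \to 0$. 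In this way every $u_{n_l}$ is approximately concentrated on the ``block range'' $R_l := (k_{l-1}, k_l]$.

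To control $\|\sum_{l=1}^n u_{n_l}\|_{V_p(\lambda)}^p$ I would split the outer index $k$ into the ranges $R_1, \ldots, R_n$ and the tail $R_\ast := (k_n, \infty)$. On each block $R_l$, applying the triangle inequality inside $|\sum_{l'} u_{n_{l'}}(j)|$ and then Minkowski's inequality in $\ell^p$ (in the variable $k$) gives
\begin{equation*}
\left(\sum_{k \in R_l} \Bigl(\sum_{l'=1}^n a_{k,l'}\Bigr)^p\right)^{1/p} \leq \left(\sum_{k \in R_l} (a_{k,l})^p\right)^{1/p} + \sum_{l' \neq l} \left(\sum_{k \in R_l} (a_{k,l'})^p\right)^{1/p}.
\end{equation*}
The first summand is at most $\|u_{n_l}\|_{V_p(\lambda)} \leq 1$. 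For $l' < l$ (respectively $l' > l$) the range $R_l$ lies in the tail (respectively head) of $u_{n_{l'}}$, so the corresponding sum is bounded by $2^{-(l'+1)p - 1}$; hence the remaining sum over $l' \neq l$ is dominated by $C_0 := \sum_{l' \geq 1} 2^{-(l'+1) - 1/p} < \infty$ uniformly in $n$ and $l$. An analogous estimate handles $R_\ast$, and summing the resulting $p$-th powers over $l = 1, \ldots, n$ produces $\|\sum_{l=1}^n u_{n_l}\|_{V_p(\lambda)}^p \leq (1 + C_0)^p \, n + C_0^p$, which yields the required bound $\|\sum_{l=1}^n u_{n_l}\|_{V_p(\lambda)} \leq C (n+1)^{1/p}$ with $C$ independent of $n$.

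The delicate point will be producing the correct order $(n+1)^{1/p}$ rather than the weaker $O(n)$ bound that a naive Jensen-type inequality applied to the cross sums $\sum_{l'} a_{k,l'}$ would give. It is Minkowski's inequality, used together with the \emph{geometric} summability of the head/tail budgets $2^{-(l+1)p-1}$, that converts the approximate block-disjointness of the $u_{n_l}$ into the correct $\ell^p$-type growth; the overlap of the index sets $I_k$ precludes a cleaner argument via literal disjoint supports.
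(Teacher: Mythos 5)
Your proposal is correct and follows the same overall strategy as the paper's proof: both are gliding-hump arguments in the style of Cui--Hudzik--Pluciennik, extracting a subsequence approximately concentrated on successive blocks of the outer index and using coordinatewise convergence of the weakly null sequence (valid because $|x(j)|\leq \lambda _{j}\,\|x\|_{V_{p}(\lambda )}$, so the coordinate functionals are bounded) to control the heads. The difference lies in how the block structure is exploited at the key step. The paper splits each $b_{j}=x_{n_{j}}$ into a head, a middle part supported on the coordinates in $(m_{j-1},m_{j}]$, and a tail, absorbs the head and tail contributions into $2\sum \varepsilon _{j}$, and then asserts that the $p$-th power of the norm of the sum of the middle parts equals $\sum_{j}\sum_{i=m_{j-1}+1}^{m_{j}}\left( \frac{1}{\lambda _{i}}\sum_{v\in I_{i}}|b_{j}(v)|\right) ^{p}\leq n+1$. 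That equality is not literally correct, for exactly the reason you flag at the end of your write-up: the windows $I_{k}$ straddle the block boundaries, so for $k\in (m_{j-1},m_{j}]$ the inner sum over $v\in I_{k}$ also picks up coordinates belonging to earlier blocks. Your version, which instead partitions the outer index $k$ into the ranges $R_{l}$ plus a final tail and applies Minkowski's inequality in $\ell ^{p}$ over each range together with the geometric head/tail budgets $2^{-(l+1)p-1}$, handles this overlap rigorously and still yields $\left\Vert \sum_{l=1}^{n}u_{n_{l}}\right\Vert _{V_{p}(\lambda )}\leq C(n+1)^{1/p}$ with $C$ independent of $n$. So your route is a cleaned-up version of the paper's argument rather than a genuinely different one; what it buys is a correct treatment of the one step where the paper's proof is loose, at the cost of a slightly larger constant than the paper's $2$.
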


\begin{proof}
From the Theorem \ref{theo2.1} b), it is known that the space $V_{p}(\lambda
)$ is a Banach space with respect to the norm $||x||_{V_{p}(\lambda )}$.

Let $\left( \varepsilon _{n}\right) $ be a sequence of positive numbers for
which $\sum\limits_{n=1}^{\infty }\varepsilon _{n}\leq \frac{1}{2}$. Let $%
\left( x_{n}\right) $ be a weakly null sequence in $B(V_{p}(\lambda ))$. Set
$b_{0}=x_{0}=0$ and $b_{1}=x_{n_{1}}=x_{1}$. Then there exists $m_{1}\in
\mathbb{N}$ such that%
\begin{equation*}
\left\Vert \sum\limits_{i=m_{1}+1}^{\infty }b_{1}(i)e^{(i)}\right\Vert
_{V_{p}(\lambda )}<\varepsilon _{1.}
\end{equation*}%
Since $(x_{n})$ is a weakly null sequence implies $x_{n}\rightarrow 0$
(coordinatewise), there is an $n_{2}\in \mathbb{N}$ such that%
\begin{equation*}
\left\Vert \sum\limits_{i=0}^{m_{1}}x_{n}(i)e^{(i)}\right\Vert
_{V_{p}(\lambda )}<\varepsilon _{1},
\end{equation*}%
where $n\geq n_{2}$. Set $b_{2}=x_{n_{2}}$. Then there exists an $%
m_{2}>m_{1} $ such that%
\begin{equation*}
\left\Vert \sum\limits_{i=m_{2}+1}^{\infty }b_{2}(i)e^{(i)}\right\Vert
_{V_{p}(\lambda )}<\varepsilon _{2.}
\end{equation*}%
By using the fact that $x_{n}\rightarrow 0$ (coordinatewise), there exists
an $n_{3}>n_{2}$ such that%
\begin{equation*}
\left\Vert \sum\limits_{i=0}^{m_{2}}x_{n}(i)e^{(i)}\right\Vert
_{V_{p}(\lambda )}<\varepsilon _{2,}
\end{equation*}%
where $n\geq n_{3}.$

If we continue this process, we can find two increasing subsequences $%
(m_{i}) $ and $(n_{i})$ such that%
\begin{equation*}
\left\Vert \sum\limits_{i=0}^{m_{j}}x_{n}(i)e^{(i)}\right\Vert
_{V_{p}(\lambda )}<\varepsilon _{j,}
\end{equation*}%
for each $n\geq n_{j+1}$ and%
\begin{equation*}
\left\Vert \sum\limits_{i=m_{j}+1}^{\infty }b_{j}(i)e^{(i)}\right\Vert
_{V_{p}(\lambda )}<\varepsilon _{j,}
\end{equation*}%
where $b_{j}=x_{n_{j}}$. Hence,%
\begin{equation*}
\left\Vert \sum\limits_{j=0}^{n}b_{j}\right\Vert _{V_{p}(\lambda
)}=\left\Vert \sum\limits_{j=0}^{n}\left(
\sum\limits_{i=0}^{m_{j-1}}b_{j}(i)e^{(i)}+\sum%
\limits_{i=m_{j-1}+1}^{m_{j}}b_{j}(i)e^{(i)}+\sum\limits_{i=m_{j}+1}^{\infty
}b_{j}(i)e^{(i)}\right) \right\Vert _{V_{p}(\lambda )}
\end{equation*}%
\begin{equation*}
\leq \left\Vert \sum\limits_{j=0}^{n}\left(
\sum\limits_{i=m_{j-1}+1}^{m_{j}}b_{j}(i)e^{(i)}\right) \right\Vert
_{V_{p}(\lambda )}+\left\Vert \sum\limits_{j=0}^{n}\left(
\sum\limits_{i=0}^{m_{j-1}}b_{j}(i)e^{(i)}\right) \right\Vert
_{V_{p}(\lambda )}+\left\Vert \sum\limits_{j=0}^{n}\left(
\sum\limits_{i=m_{j}+1}^{\infty }b_{j}(i)e^{(i)}\right) \right\Vert
_{V_{p}(\lambda )}.
\end{equation*}%
\begin{equation*}
\leq \left\Vert \sum\limits_{j=0}^{n}\left(
\sum\limits_{i=m_{j-1}+1}^{m_{j}}b_{j}(i)e^{(i)}\right) \right\Vert
_{V_{p}(\lambda )}+2\sum\limits_{j=0}^{n}\varepsilon _{j}.
\end{equation*}%
On the other hand since,

$\left\Vert x_{n}\right\Vert _{V_{p}(\lambda )}=\left(
\sum\limits_{k=1}^{\infty }\left( \frac{1}{\lambda _{k}}\dsum\limits_{j\in
I_{k}}|x_{n_{k}}\left( j\right) |\right) ^{p}\right) ^{\frac{1}{p}}$, it can
be seen that $\left\Vert x_{n}\right\Vert _{V_{p}(\lambda )}<1.$ Therefore $%
\left\Vert x_{n}\right\Vert _{V_{p}(\lambda )}^{p}<1$. We have%
\begin{eqnarray*}
\left\Vert \sum\limits_{j=0}^{n}\left(
\sum\limits_{i=m_{j-1}+1}^{m_{j}}b_{j}(i)e^{(i)}\right) \right\Vert
_{V_{p}(\lambda )}^{p}
&=&\sum\limits_{j=0}^{n}\sum\limits_{i=m_{j-1}+1}^{m_{j}}\left( \frac{1}{%
\lambda _{i}}\dsum\limits_{v\in I_{i}}|b_{j}(v)|\right) ^{p} \\
&\leq &\sum\limits_{j=0}^{n}\sum\limits_{i=0}^{\infty }\left( \frac{1}{%
\lambda _{i}}\dsum\limits_{v\in I_{i}}|b_{j}(v)|\right) ^{p} \\
&\leq &(n+1).
\end{eqnarray*}%
Hence we obtain,

\begin{equation*}
\left\Vert \sum\limits_{j=0}^{n}\left(
\sum\limits_{i=m_{j-1}+1}^{m_{j}}b_{j}(i)e^{(i)}\right) \right\Vert
_{V_{p}(\lambda )}\leq (n+1)^{\frac{1}{p}}.
\end{equation*}%
By using the fact \ $1\leq (n+1)^{\frac{1}{p}}$ \ for all\ $n\in \mathbb{N}$
\ and $\ 1\leq p<\infty $, we have%
\begin{equation*}
\left\Vert \sum\limits_{j=0}^{n}b_{j}\right\Vert _{V_{p}(\lambda )}\leq
(n+1)^{\frac{1}{p}}+1\leq 2(n+1)^{\frac{1}{p}}.
\end{equation*}%
Hence $V_{p}(\lambda )$ has the Banach-Saks type $p$. This completes the
proof of the theorem.
\end{proof}


\begin{thebibliography}{10}
\bibitem[1]{Basar} F. Basar, B. Altay and M. Mursaleen, Some generalizations
of the space $bv_{p}$ of $p$-bounded variation sequences, $Nonlinear$ $%
Analysis:Theory,$ $Methods\&Applications$, Volume 68, (2), (2008) 273-287.

\bibitem[2]{Chen} S. T. Chen, Geometry of Orlicz spaces, $Dissertationes$ $%
Math.$, 356 (1996).

\bibitem[3]{Cui-Hud-1998} Y.A. Cui, H.Hudzik, On the uniform Opial property
in some modular sequence spaces, $Funct.$ $Approx.Comment.$ 26 (1998) 93-102.

\bibitem[4]{Cui-Hud-Plu-1997} Y. A. Cui, H. Hudzik and R. Pluciennik,
Banach-Saks property in some Banach sequence spaces, $Annales$ $Math.$ $%
Polinici$, 65 (1997) 193-202.

\bibitem[5]{Diestel} J. Diestel, $Geometry$ $of$ $Banach$ $spaces$- Selected
Topics, Springer-Verlag, (1984).

\bibitem[6]{Mik2006} M. Et, Spaces of Ces\'{a}ro difference sequences of
order $r$ defined by a modulus function in a locally convex space, $Taiwanese
$ $J.$ $Math.,$ 10 (2006) no. 4, 865-879.

\bibitem[7]{Karakaya2007} V. Karakaya, Some geometric properties of sequence
spaces involving Lacunary Sequence, $Journal$ $of$ $Inequalities$ $and$ $%
Applications$, Volume (2007), Article ID 81028, 8 pages
doi:10.1155/2007/81028.

\bibitem[8]{Knau-1992} H. Knaust, Orlicz sequence spaces of Banach-Saks
type, $Arch.$ $Math.$ 59 (1992) 562-565.

\bibitem[9]{Leindler-1965} L. Leindler, \"{U}ber die verallgemeinerte de la
Vall\'{e}e-Poussinsche summierbarkeit allgemeiner Orthogonalreihen, $Acta$ $%
Math.$ $Acad.$ $Sci.$ $Hungar.$ 16, no.3-4 (1965) 375--387.

\bibitem[10]{Linden-1977} J. Lindenstrauss, L. Tzafriri, $Classical$ $Banach$
$spaces-I$, Springer-Verlag, Berlin, 1977.

\bibitem[11]{Liu-1996} Y.\ Q. Liu, B. E. Wu and Y.\ P. Lee, $Method$ $of$ $%
sequence$ $space$s, Guangdong of Science and Technology Press, (1996) (in
Chinese).

\bibitem[12]{Maddox-1} I. J. Maddox, On Kuttners theorem, $J.$ $London$ $%
Math.$ $Soc.$ 43 (1968), 285-290.

\bibitem[13]{Maddox-3} I. J. Maddox, Paranormed sequence spaces generated by
infinite matrices, $Proc.$ $Cambridge$ $Philos.$ $Soc.$ 64 (1968) 335-340.

\bibitem[14]{MalSav-2000} E. Malkowsky, E. Sava\c{s}, Some $\lambda $%
-sequence space defined by a modulus, $Archivum$ $Mathematicum$ $(BRNO)$,
Tomus 36 (2000), 219-228.

\bibitem[15]{MurFeyz-2006} M. Mursaleen, F. Ba\c{s}ar, B. Altay, On the
Euler sequence spaces which include the spaces $l_{p}$ and $l_{\infty }$ II,
$Nonlinear$ $Analysis$ 65 (2006) 707-717.

\bibitem[16]{Mu-2007} M. Mursaleen, Rifat Colak and Mikail Et, Some
Geometric Inequalities in a New Banach Sequence Space, $Journal$ $of$ $%
Inequalities$ $and$ $Applications$, Volume 2007 (2007), Article ID 86757, 6
pages, doi:10.1155/2007/86757.

\bibitem[17]{Mus-1983} J. Musielak, $Orlicz$ $spaces$ $and$ $modular$ $spaces
$, Lecture Notes in Mathematics, Vol.1034, Springer-Berlin, 1983.

\bibitem[18]{Orlicz-1990} W. Orlicz, $Linear$ $Functional$ $Analysis$, World
Sci. Publ. Co. Ltd. Springer, 1990.

\bibitem[19]{San-Suan-2003} W. Sanhan and S. Suantai, Some geometric
properties of Ces\'{a}ro sequence space, $Kyungpook$ $Math.$ $J.$ 43 (2003)
191-197.

\bibitem[20]{Savsav-2003} E. Sava\c{s}, R. Sava\c{s}, Some $\lambda $%
-sequence spaces defined by Orlicz functions. $Indian$ $J.$ $Pure$ $Appl.$ $%
Math.$ 34 (2003), no. 12, 1673--1680.

\bibitem[21]{Shue-1970} J. S. Shue, Ces\'{a}ro sequence spaces, $Tamkang$ $J.
$ $Math.$ 1 (1970) 143-150.

\bibitem[22]{Suant2003} S. Suantai, On the H-property of some Banach
sequence spaces, $Arch.$ $Math.$ $(BRNO)$ 39 (2003) 309-316.

\bibitem[23]{Nec-Vat-2009} N. \c{S}im\c{s}ek and V. Karakaya, On some
geometrial properties of generalized modular spaces of Ces\'{a}ro type
defined by weighted means, $Journal$ $of$ $Inequalities$ $and$ $Applications$%
, (2009), Article in press.
\end{thebibliography}
\end{document}